\newtheorem{theorem}{Theorem}[section]
\newtheorem*{theorem*}{Theorem}
\newtheorem{lemma}[theorem]{Lemma}
\newtheorem{proposition}[theorem]{Proposition}
\newtheorem{porism}[theorem]{Porism}
\theoremstyle{definition}
\newtheorem{example}[theorem]{Example}
\newtheorem{chunk}[theorem]{}
\theoremstyle{remark}
\newtheorem{remark}[theorem]{Remark}
\newcommand{\Coker}{\operatorname{Coker}}
\newcommand{\injdim}{\operatorname{inj.\!dim}}
\newcommand{\flatdim}{\operatorname{flat\,dim}}
\newcommand{\ZZ}{\mathbb{Z}}
\newcommand{\Spec}{\operatorname{Spec}}
\newcommand{\Ext}{\operatorname{Ext}}
\newcommand{\Tor}{\operatorname{Tor}}
\newcommand{\hh}{\operatorname{H}}
\newcommand{\Hom}{\operatorname{Hom}}
\newcommand{\RHom}{\mathsf{R}\mathrm{Hom}}
\newcommand{\m}{\mathfrak{m}}
\newcommand{\lotimes}{\otimes^{\mathsf{L}}}
\newcommand{\Ker}{\operatorname{Ker}}
\newcommand{\llam}{\mathsf{L}\Lambda}
\newcommand{\rgam}{\mathsf{R}\Gamma}
\newcommand{\fa}{\mathfrak{a}}
\newcommand{\fp}{\mathfrak{p}}
\newcommand{\depth}{\operatorname{depth}}
\newcommand{\width}{\operatorname{width}}
\title{\mbox{Rigidity of Ext and Tor via flat--cotorsion theory}}
\author[L.W.~Christensen]{Lars Winther Christensen}
\address{(L.W.C.) Texas Tech University, Lubbock, TX 79409, U.S.A.}
\email{lars.w.christensen@ttu.edu}
\author[L.~Ferraro]{Luigi Ferraro}
\address{(L.F.) University of Texas Rio Grande Valley, Edinburg, TX 78539, U.S.A.}
\email{luigi.ferraro@utrgv.edu}
\author[P. Thompson]{Peder Thompson}
\address{(P.T.) Mälardalen University, Västerås 72123, Sweden}
\email{peder.thompson@mdu.se}
\thanks{L.W.C.\ was partly supported by Simons Foundation
  collaboration grant 428308.}
\date{19 September 2023}
\keywords{Ext, Tor, rigidity, injective dimension, flat dimension, colocalization}
\subjclass[2020]{13D07; 13D05}
\begin{document}

\begin{abstract}
  Let $\fp$ be a prime ideal in a commutative noetherian ring $R$ and
  denote by $k(\fp)$ the residue field of the local ring $R_\fp$. We
  prove that if an $R$-module $M$ satisfies $\Ext_R^{n}(k(\fp),M)=0$
  for some $n\geqslant\dim R$, then \mbox{$\Ext_R^i(k(\fp),M)=0$}
  holds for all $i \geqslant n$. This improves a result of
  Christensen, Iyengar, and Marley by lowering the bound on $n$. We
  also improve existing results on Tor-rigidity. This progress is
  driven by the existence of minimal semi-flat-cotorsion replacements
  in the derived category as recently proved by Nakamura~and~Thompson.
\end{abstract}

\maketitle

\section{Introduction}
\noindent
Over a commutative noetherian ring $R$, the injective and flat
dimension of a module can be detected by vanishing of Ext and Tor with
coefficients in residue fields $k(\fp)$ at the prime ideals $\fp$ of
$R$. This drives the interest in \emph{rigidity} properties of Ext and
Tor---here rigidity refers to the phenomenon that vanishing of, say,
$\mathrm{Ext}^n$ implies vanishing of $\mathrm{Ext}^i$ for all
$i\geqslant n$.  Rigidity of Ext and Tor with coefficients in residue
fields was studied by Christensen, Iyengar, and Marley in
\cite{CIM-19}. Here we push the investigation further in two
directions: (1) We eliminate certain asymmetries in the rigidity
statements for Ext/Tor and injective/flat dimension obtained in
\cite{CIM-19}. (2)~We establish and improve results on flat dimension that are
conceptually dual to results on injective dimension already in the
literature, including results obtained in~\cite{CIM-19}.

We work with complexes of modules and our main result, found in
Section~\ref{sec:id},~is:
\begin{theorem*}
  Let $R$ be a commutative noetherian ring and  $M$ an $R$-complex. If for an integer
  $n \geqslant \dim R + \sup{\hh^*(M)}$ one has
  $\Ext_R^{n}(k(\fp),M)=0$ for all prime ideals $\fp$ in $R$, then
  $\injdim_R M < n$ holds.
\end{theorem*}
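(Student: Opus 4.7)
The plan is to reduce the theorem to a local rigidity statement on Bass numbers and then to read off that statement from a minimal semi-flat-cotorsion replacement of $M$.

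\emph{Reduction to local rigidity.} First I would localize at each prime. Using $\Ext_R^n(k(\fp),M)\cong\Ext_{R_\fp}^n(k(\fp),M_\fp)$, the locality of Bass numbers $\mu^i(\fp,M)=\mu^i(\fp R_\fp,M_\fp)$, and the inequalities $\dim R_\fp\leqslant\dim R$ and $\sup\hh^*(M_\fp)\leqslant\sup\hh^*(M)$, the hypothesis descends to each localization without loss of the numerical bound. Since $\injdim_R M=\sup\{i:\mu^i(\fp,M)\neq 0\text{ for some }\fp\}$, it suffices to prove the local rigidity: for local $(R,\m,k)$ of dimension $d$ and an $R$-complex $M$ with $s=\sup\hh^*(M)$, if $\mu^n(\m,M)=0$ for some $n\geqslant d+s$, then $\mu^i(\m,M)=0$ for all $i\geqslant n$.

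\emph{Minimal semi-flat-cotorsion replacement.} In the local case I would invoke Nakamura--Thompson to fix a minimal semi-flat-cotorsion replacement $T$ of $M$. Each term $T^i$ is flat-cotorsion and decomposes as $T^i\cong\prod_{\fp\in\Spec R}T_\fp^i$, where $T_\fp^i$ is the $\fp R_\fp$-adic completion of a free $R_\fp$-module whose rank is an intrinsic invariant of $M$. The central feature of this framework is that the vanishing of $\Ext^n_R(k,M)$ translates into an explicit vanishing of the $\m$-block of $T$ in a particular cohomological degree---concretely, the hypothesis $\mu^n(\m,M)=0$ becomes a statement that a certain cotorsion-flat block in the minimal replacement is zero.

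\emph{Propagation via dimension.} The task is then to propagate this single vanishing to $T_\m=0$ in all degrees $\geqslant n$, which, once translated back, gives $\mu^i(\m,M)=0$ for all $i\geqslant n$. Minimality of $T$ together with the structural constraints on how the differentials of a minimal semi-flat-cotorsion complex may connect cotorsion-flat blocks at distinct primes should imply that any nonzero $\m$-block in a degree $i>n$ must be reachable, through a chain of differentials, from an $\m$-block in an earlier degree. Since chains of primes in $\Spec R$ have length at most $d=\dim R$, and the cohomological amplitude $s$ is absorbed in the lowest degrees, the hypothesis $n\geqslant d+s$ forces the trace-back to terminate at a nonzero $\m$-block in degree $\leqslant n$, contradicting the original vanishing. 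A finite induction of length at most $d$ completes the propagation.

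\emph{Main obstacle.} The hard step will be this propagation: extracting from Nakamura--Thompson's theory the precise structural constraints on the differentials of $T$ and formalizing the trace-back under the bound $n\geqslant d+s$. Once the local rigidity of Bass numbers is established, applying it prime-by-prime yields $\mu^i(\fp,M)=0$ for all primes $\fp$ and all $i\geqslant n$, hence $\injdim_R M<n$.
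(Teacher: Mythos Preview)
Your reduction via localization is sound: the isomorphism $\Ext^n_R(k(\fp),M)\cong\Ext^n_{R_\fp}(k(\fp),M_\fp)$ together with $\dim R_\fp\leqslant\dim R$ and $\sup\hh^*(M_\fp)\leqslant\sup\hh^*(M)$ does reduce the theorem to the local rigidity statement you isolate. From that point on, however, the proposal goes wrong.

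The central error is the claimed translation of $\mu^n(\m,M)=0$ into the vanishing of a block of a minimal semi-flat-cotorsion replacement $T$. The ranks of the blocks $T_\fp^i$ are the \emph{dual} Bass numbers of $M$; as the proof of \cref{thm:fdRigid} makes explicit, they record $\Tor^{R_\fp}_i(k(\fp),\RHom_R(R_\fp,M))$ and detect flat dimension, not the Bass numbers $\mu^i(\fp,M)=\dim_{k(\fp)}\Ext^i_R(k(\fp),M)$. The structural fact you want---that vanishing of $\Ext^n_R(k,M)$ kills a block in degree $n$---holds for the minimal semi-\emph{injective} resolution, not for $T$. Consequently the ``propagation'' step has no foundation, and the heuristic about chains of primes constraining the differentials of $T$ is not a theorem available in the Nakamura--Thompson framework.

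In fact, after your (correct) reduction no new machinery is required at all. Over the local ring $R_\fp$ the Grothendieck-vanishing bound \cite[(2.7)]{CIM-19} gives $\sup\hh^*_{\fp R_\fp}(M_\fp)\leqslant\dim R_\fp+\sup\hh^*(M_\fp)\leqslant n$, and then \cite[Proposition~3.2]{CIM-19} applied to $M_\fp$ yields $\Ext^i_{R_\fp}(k(\fp),M_\fp)=0$ for all $i\geqslant n$; the theorem follows. The paper takes a different route, working with the colocalization $\RHom_R(R_\fp,M)$ rather than $M_\fp$, and it \emph{does} use a minimal semi-flat-cotorsion replacement of $M$---but not to read off Bass numbers. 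It is invoked, via \cite[Lemma~4.1]{TNkPTh20}, solely to bound $\sup\hh^*(\RHom_R(R_\fp,\llam^\fp M))$ by $\sup\hh^*(M)+\dim R/\fp$, which combined with \cite[(2.7)]{CIM-19} again feeds into \cite[Proposition~3.2]{CIM-19}.
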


\noindent This improves \cite[Theorem 5.7]{CIM-19} and aligns
perfectly with \cite[Theorem 4.1]{CIM-19} on flat dimension. By a
result of Christensen and Iyengar \cite[Theorem 1.1]{LWCSBI17}, the
proof reduces to show, for complexes, the statement on Ext-rigidity
made in the Abstract.

In Section~\ref{sec:fd} we prove results on flat dimension of complexes which are dual to already established results on injective dimension. In one of these, we remove a boundedness assumption from \cite[Theorem 4.8]{TNkPTh20}, which is dual to \cite[Corollary~5.9]{CIM-19}:

\begin{theorem*}
Let $R$ be a commutative noetherian ring of finite Krull dimension and $M$ an $R$-complex. If $M$ has finite flat dimesion, then the next equality holds,
  \begin{equation*}
    \flatdim_R M = \sup_{\fp\in\Spec R}\{\depth{R_\fp} - \depth_{R_\fp}\RHom_R(R_\fp,M)\} \,.
  \end{equation*}
\end{theorem*}

%

A novel aspect of our arguments involves the notion of minimal
semi-flat-cotorsion replacements. Recall first that a flat-cotorsion
module is one which is both flat and right $\Ext^1$-orthogonal to flat
modules. A semi-flat complex consisting of flat-cotorsion modules is
called a semi-flat-cotorsion complex. Work of Gillespie \cite{JGl04}
shows that every complex is isomorphic in the derived category to a
semi-flat-cotorsion complex; such a complex is called a
\emph{semi-flat-cotorsion replacement}.

Minimality plays a crucial role in considerations of rigidity and
homological dimensions. A complex $M$ is minimal if every homotopy
equivalence \mbox{$M \to M$} is an isomorphism. Minimal semi-injective
resolutions always exist, and they detect injective dimension; several
proofs in \cite{CIM-19} rely on this. On the other hand, although
semi-flat resolutions always exist, they may not contain a
homotopically equivalent minimal summand that detects the flat
dimension, see \cite[Example 3.9]{LWCPTh19}. Recently, Nakamura and Thompson \cite{TNkPTh20} showed that
every complex over a commutative noetherian ring of finite Krull
dimension has a minimal semi-flat-cotorsion replacement and that such
a complex detects the flat dimension. 

In this paper, we also take the opportunity to clarify a couple of statements in
\cite{CIM-19}; see Remarks \ref{rmk:1} and \ref{rmk:2}.
\begin{equation*}
  \ast \ \ \ast \ \ \ast
\end{equation*}
In this paper $R$ is a commutative noetherian ring. We widely adopt
the notation used in \cite{CIM-19}; in particular, we use both
homological and cohomological notation.

\begin{chunk}
  Let $M$ be an $R$-complex.

  For an integer $s$ we denote by $\Sigma^sM$ the complex with the
  module $M_{i-s}$ in degree $i$ and differential
  $d^{\Sigma^sM}=(-1)^sd^M$.

  Set $\inf \hh_*(M)\colonequals \inf\{i \mid \hh_i(M)\not=0\}$ and
  $\inf \hh^*(M)\colonequals \inf\{i\mid \hh^i(M)\not=0\}$, and define
  $\sup\hh_*(M)$ and $\sup\hh^*(M)$ similarly.

  In case $R$ is local with maximal ideal $\m$, the right derived
  $\m$-torsion functor is denoted $\rgam_\m$ and $\llam^{\m}$ is the
  left derived $\m$-completion functor. The corresponding local
  (co)homology modules are denoted $\hh_\m^*(M)$ and $\hh^\m_*(M)$. As
  always,
  \begin{equation*}
    \depth_RM = \inf \Ext^*_R(k,M) \quad\text{and}\quad
    \width_RM = \inf \Tor_*^R(k,M) 
  \end{equation*}
  where $k$ denotes the residue field $R/\m$.
\end{chunk}

We recall from \cite{TNkPTh20} some properties of minimal
semi-flat-cotorsion complexes that will be used frequently.

\begin{chunk}
  \label{chunk}
  Assume that $R$ has finite Krull dimension and let $M$ be an
  $R$-complex. By \cite[Theorem 3.4]{TNkPTh20} there exists a minimal
  semi-flat-cotorsion complex $F$ isomorphic to $M$ in the derived
  category: a minimal semi-flat-cotorsion replacement of $M$.
    
  If $M$ has finite flat dimension, then it follows from \cite[Lemma
  4.1]{TNkPTh20} that $F_i = 0$ holds for $i > \flatdim_RM$.
    
  If $\hh_i(M) = 0$ holds for $i \ll 0$, then $F_i = 0$ for $i \ll 0$
  by \cite[Lemma~4.1]{TNkPTh20}.
    
  For every prime ideal $\fp$ in $R$ the complex $\Hom_R(R_\fp,F)$ is
  isomorphic in the derived category to $\RHom_R(R_\fp,M)$, see
  \cite[(A.1)]{TNkPTh20}. Further, it consists by \cite[Lemma
  2.2]{PTh19} of flat-cotorsion $R$-modules, so if $F_i = 0$ for
  $i \ll 0$, then $\Hom_R(R_\fp,F)$ is a semi-flat-cotorsion
  replacement of $\RHom_R(R_\fp,M)$ over both $R$ and $R_\fp$.
\end{chunk}

\section{Rigidity of Tor}
\label{sec:Tor}
\noindent
The main result of this section, Theorem \ref{thm:supTor} below,
removes a boundedness condition on $\hh_*(M)$ from
\cite[Proposition 3.3(1)]{CIM-19} and aligns perfectly---see
Remark~\ref{rmk:0} below---with \cite[Proposition 3.2]{CIM-19} on
rigidity of Ext.

\begin{lemma}
  \label{lem:TenskLam}
  Let $(R,\m,k)$ be a local ring and $F$ a minimal semi-flat-cotorsion
  $R$-complex. There is an isomorphism of $R$-complexes
  $k\otimes_RF\cong k\otimes_R\Lambda^\m(F)$, and both complexes have
  zero differential.
\end{lemma}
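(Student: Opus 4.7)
The plan is to construct the asserted isomorphism of complexes as the map induced by the canonical completion morphism $\eta\colon F \to \Lambda^\m F$, showing it is degreewise bijective; only then do I invoke minimality of $F$ to conclude that the differentials vanish.

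To prove that $1 \otimes \eta \colon k \otimes_R F \to k \otimes_R \Lambda^\m F$ is a degreewise isomorphism I would apply the Enochs--Xu structure theorem for flat-cotorsion modules, which decomposes each $F_i$ as a product $\prod_{\fp \in \Spec R} T_\fp$ with $T_\fp$ the $\fp$-adic completion of a free $R_\fp$-module. Noetherianity of $R$ ensures $\m^n \cdot \prod_\fp T_\fp = \prod_\fp \m^n T_\fp$, so both $k \otimes_R (-)$ and $\Lambda^\m$ distribute over this product. For $\fp \neq \m$, pick $s \in \m \setminus \fp$: since $T_\fp$ is an $R_\fp$-module, $s$---and hence $s^n$---acts invertibly on it, forcing $\m^n T_\fp = T_\fp$ for every $n$, whence $k \otimes_R T_\fp = 0 = \Lambda^\m T_\fp$. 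The remaining factor $T_\m = \Lambda^\m(R^{(X)})$ is already $\m$-adically complete in the noetherian setting, so $\Lambda^\m T_\m = T_\m$. Together these observations identify both $k \otimes_R F_i$ and $k \otimes_R \Lambda^\m F_i$ canonically with $k \otimes_R T_\m$, and the induced chain map is an isomorphism.

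For the vanishing of the differentials I would appeal to the minimality characterization developed in \cite{TNkPTh20}: a semi-flat-cotorsion complex over $(R,\m)$ is minimal precisely when the differential on $\Lambda^\m F$ has image in $\m\Lambda^\m F$, equivalently, when $k \otimes_R \Lambda^\m F$ has zero differential. Transporting this across the isomorphism from the previous step yields the same vanishing for $k \otimes_R F$.

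The main obstacle, I expect, will be extracting the minimality criterion from \cite{TNkPTh20} in exactly the form needed---if it is not recorded there in this shape, a short supplementary argument exploiting Enochs--Xu decomposition should suffice---together with a careful verification that $\m$-adic completion of the (typically non-finitely-generated) flat-cotorsion modules $F_i$ behaves well with respect to the relevant infinite products; both points rest on noetherianity of $R$.
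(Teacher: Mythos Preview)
Your proposal is correct and follows essentially the same route as the paper's proof: both use the canonical map $F\to\Lambda^\m F$, establish the degreewise isomorphism via the Enochs--Xu decomposition of flat-cotorsion modules (the paper hides this behind citations to \cite[(1.13), (1.15), (1.17)]{TNkPTh20}), and invoke the minimality criterion \cite[Theorem~2.3]{TNkPTh20} for vanishing of the differential. Your anticipated obstacle is not one---the minimality characterization you need is exactly what \cite[Theorem~2.3]{TNkPTh20} provides, so no supplementary argument is required.
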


\begin{proof}
  The canonical map $F\to \Lambda^\m(F)$ induces per \cite[Theorem
  2.2.2]{PScAMS} an isomorphism of complexes
  $k\otimes_R F\to k\otimes_R \Lambda^\m(F)$.  Minimality of $F$
  implies per \cite[Theorem 2.3]{TNkPTh20} that $k\otimes_R F$, and
  hence also $k\otimes_R \Lambda^\m(F)$, has zero differential.
\end{proof}

\begin{theorem}
  \label{thm:supTor}
  Let $(R,\m,k)$ be a local ring and $M$ an $R$-complex. If one has
  $\Tor^R_{n+1}(k,M)=0$ for an integer $n \geqslant\sup\hh^\m_*(M)$,
  then $\Tor^R_i(k,M)=0$ holds for all $i > n$ and the next equality
  holds,
  \begin{equation*}
    \sup{\Tor^R_*(k,M)} = \depth R - \depth_R M \,.
  \end{equation*}
\end{theorem}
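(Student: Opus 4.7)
The plan is to pass to a minimal semi-flat-cotorsion replacement $F$ of $M$, which exists by \ref{chunk} because every noetherian local ring has finite Krull dimension. By Lemma~\ref{lem:TenskLam} the complex $k\otimes_R F\cong k\otimes_R\Lambda^\m(F)$ has zero differential, so $\Tor^R_i(k,M)\cong k\otimes_R\Lambda^\m(F)_i$; since each $\Lambda^\m(F)_i$ is $\m$-adically complete, Nakayama gives $\Tor^R_i(k,M)=0$ if and only if $\Lambda^\m(F)_i=0$. Moreover, $\Lambda^\m(F)$ consists of flat-cotorsion modules and so represents $\llam^\m M$ in the derived category, yielding $\hh_i(\Lambda^\m F)=\hh^\m_i(M)$.

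I would then establish $\Tor^R_i(k,M)=0$ for $i>n$ by induction on $i$. The base case $i=n+1$ follows directly from the equivalence above. For the inductive step, assume $\Lambda^\m(F)_j=0$ for all $n<j<i$. Then the differential out of $\Lambda^\m(F)_i$ is zero, while acyclicity $\hh_i(\Lambda^\m F)=\hh^\m_i(M)=0$ (which holds as $i>n\geqslant\sup\hh^\m_*(M)$) forces the incoming differential $\Lambda^\m(F)_{i+1}\to\Lambda^\m(F)_i$ to be surjective. Lemma~\ref{lem:TenskLam} ensures this differential has image contained in $\m\Lambda^\m(F)_i$, so $\m\Lambda^\m(F)_i=\Lambda^\m(F)_i$, and Nakayama for complete modules gives $\Lambda^\m(F)_i=0$, completing the induction.

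For the equality $\sup\Tor^R_*(k,M)=\depth R-\depth_R M$, we now know $\Lambda^\m(F)_i=0$ for $i>n$, so $\Lambda^\m F$ presents $\llam^\m M$ as a complex of flats concentrated in degrees $\leqslant n$; hence $\llam^\m M$ has finite flat dimension and bounded-above homology. Since $k$ is $\m$-torsion, $\Tor^R_*(k,M)\cong\Tor^R_*(k,\llam^\m M)$ and $\depth_R M=\depth_R\llam^\m M$. Applying the Auslander--Buchsbaum--Foxby formula to $\llam^\m M$ then yields the equality and transfers it back to $M$. The main technical obstacle lies in the inductive vanishing step, which must coordinate three ingredients at each stage: minimality of $F$ (placing the differential in $\m\cdot$), $\m$-adic completeness of each $\Lambda^\m(F)_i$ (enabling Nakayama), and the local-homology hypothesis (supplying acyclicity).
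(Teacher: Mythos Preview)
Your argument for the vanishing claim is correct and is genuinely different from the paper's. The paper truncates $P=\Lambda^\m(F)$ at $s=\sup\hh^\m_*(M)$, passes to the cokernel module $C=\Coker(P_{s+1}\to P_s)$, observes that $C$ is derived $\m$-complete, and then invokes \cite[Lemma~2.1]{CIM-19} to conclude that $C$ has finite flat dimension. Your direct Nakayama induction on the degrees of $\Lambda^\m(F)$ is more elementary: it uses only the three facts you list (zero differential on $k\otimes_R\Lambda^\m(F)$, $\m$-adic completeness of each term, and acyclicity above $s$) and avoids the external input from \cite{CIM-19}. What the paper's route buys is a slightly stronger intermediate conclusion (an actual bound on $\flatdim_R C$), while your route is self-contained. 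One small correction: the reason $\Lambda^\m(F)$ represents $\llam^\m M$ is that $F$ is semi-flat (see \cite[Proposition~3.6]{PSY-14}), not that its terms are flat--cotorsion.

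For the equality, your approach diverges from the paper's and has a gap as stated. You want to apply an Auslander--Buchsbaum--Foxby formula to $\llam^\m M$, but at this point $\llam^\m M$ is only known to have bounded-above homology and finite flat dimension; the standard AB--Foxby formula
\[
\flatdim_R N \;=\; \depth R - \depth_R N
\]
is stated for complexes with \emph{bounded} homology, and the identification $\flatdim_R N=\sup\Tor^R_*(k,N)$ likewise requires $\inf\hh_*(N)>-\infty$. Neither is available here without further argument. The paper sidesteps this by Matlis duality: with $E$ the injective envelope of $k$, adjunction gives $\Hom_R(\Tor_i^R(k,M),E)\cong\Ext^i_R(k,\Hom_R(M,E))$, so the already-established vanishing forces $\Ext^i_R(k,\Hom_R(M,E))=0$ for $i\gg 0$, and then \cite[Proposition~3.2]{CIM-19} together with $\width_R\Hom_R(M,E)=\depth_R M$ yields the equality directly. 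In effect, the identity you want for $\llam^\m M$ is precisely what the paper proves via this duality, so citing it as a known ``AB--Foxby formula'' is circular unless you can point to an independent source valid for complexes with unbounded-below homology.
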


\begin{proof}
  If the complex $k\lotimes_RM$ is acyclic, then one has
  $\sup\Tor^R_*(k,M)=-\infty$, so the assertion of the theorem is
  trivial, and so is the equality since $\depth_RM=\infty$ holds in
  this case, see \cite[Definitions 2.3 and 4.3]{HBFSIn03}. Hence we may
  assume that $k\lotimes_RM$ is not acyclic. From \cite[(2.6)]{CIM-19}
  it follows that $\llam^\m(M)$ is not acyclic. If
  $\sup\hh^\m_*(M)=\infty$ holds, then the statement is vacuously
  true. Thus, we may assume that $\sup\hh^\m_*(M)<\infty$ holds and
  set $s \colonequals \sup\hh^\m_*(M)$.

  Let $F$ be a minimal semi-flat-cotorsion replacement of $M$, see
  \ref{chunk}, and set $P = \Lambda^\m(F)$. As $F$ is semi-flat, one
  has $\llam^\m(M)=P$, see for example \cite[Proposition 3.6]{PSY-14},
  and \cref{lem:TenskLam} yields the isomorphism
  $k\lotimes_RM \simeq k\otimes_RP$ in the derived category. For every
  $i\in\mathbb{Z}$ the $\m$-complete module $P_i = \Lambda^\m(F_i)$ is
  flat-cotorsion, see \cite[Lemma 2.2]{PTh19}. The complex
  $\Sigma^s(P_{\geqslant s})$ is a semi-flat-cotorsion replacement of
  the module $C = \Coker({P_{s+1}\rightarrow P_s})$.  For every
  integer $i > s$ there are isomorphisms
  \begin{equation*}
    \Tor_i^R(k,M) \cong \hh_i(k\otimes_RP) \cong
    \hh_i(k\otimes_RP_{\geqslant s}) \cong \Tor_{i-s}^R(k,C) \,.
  \end{equation*}
  Let $n\geqslant s$ and assume that $\Tor^R_{n+1}(k,M)=0$ holds. By
  the isomorphisms above one has $\Tor_{n+1-s}^R(k,C)=0$. The complex
  $P$, and hence the truncated complex $P_{\geqslant s}$, is
  $\m$-complete, so the module $C$ is derived $\m$-complete. It now
  follows from \cite[Lemma 2.1]{CIM-19} that the module $C$ has flat
  dimension at most $n-s$; in particular,
  $\Tor_i^R(k,M) = \Tor^R_{i-s}(k,C)=0$ holds for all
  $i\geqslant n+1$. This proves the first claim.
  
  To prove the asserted equality, let $E$ be the injective envelope of
  $k$. Adjunction yields
  \begin{equation*}
      \Hom_R(\Tor_i^R(k,M),E) \cong \Ext^i_R(k,\Hom_R(M,E))\,,
  \end{equation*}
  so
  \mbox{$\Ext^i_R(k,\Hom_R(M,E)) = 0$} holds for $i \gg 0$. Now
  faithful injectivity of $E$, together with \cite[Proposition 3.2]{CIM-19}, yields
  \begin{align*}
    \sup{\Tor^R_*(k,M)} &= \sup{\Ext_R^*(k,\Hom_R(M,E))} \\
                        & = \depth R- \width_R \Hom_R(M,E) \\
                        & = \depth R - \depth_R M \,,   
  \end{align*}
  where the last equality is standard, see for example
  \cite[Proposition 4.4]{HBFSIn03}.
\end{proof}

\begin{remark}
  \label{rmk:0}
  The bound on $n$ in \cite[Proposition 3.2]{CIM-19} appears to be $1$
  lower than the bound in Theorem~\ref{thm:supTor}, but as noted in
  the opening paragraph of \cite[Section~3]{CIM-19} the lower bound
  cannot possibly be attained.  We show below that one could
  similarly lower the bound in Theorem~\ref{thm:supTor} by $1$ as $\Tor_s^R(k,M) \ne 0$ holds for $s = \sup\hh^\m_*(M)$.

  Let $M$ be an $R$-complex and $F$ a minimal semi-flat-cotorsion
  replacement of $M$. Set $n=\sup \hh^\m_*(M)$.  In degree $n$, the
  complex $\Lambda^\m(F)$ is nonzero and Lemma~\ref{lem:TenskLam}
  yields $\Tor^R_n(k,M) = k\otimes_R \Lambda^\m(F)_n$, which is
  nonzero, see \cite[Observation 2.1.2]{PScAMS}.
\end{remark}

\begin{remark}
  \label{rmk:1}
  The conclusion of \cite[Lemma 2.1]{CIM-19} states that
  $\Tor_n^R(-,M)=0$ holds if $M$ is a derived $\fa$-complete complex
  with $\inf\hh_*(M) > -\infty$ and $\Tor_n^R(R/\fp,M)=0$ holds for
  all prime ideals $\fp$ that contain $\fa$. We notice here that the proof
  in \cite{CIM-19} only demonstrates this for $\Tor_n^R(-,M)$ as a
  functor on the category of $R$-modules (not $R$-complexes). This is
  sufficient for the purposes of its use in both \cite{CIM-19} and the
  proof above. To see that the conclusion fails for Tor as a functor
  on complexes, let $(R,\m,k)$ be a complete local ring and notice that though $\Tor_n^R(k,R) = 0$ holds for every
  $n \geqslant 1$ one has $\Tor_n^R(\Sigma^nk,R) \cong k$.
\end{remark}

\section{Injective dimension}
\label{sec:id}
\noindent The next result improves the bound on $n$ in \cite[Proposition 5.4 and Theorem 5.7]{CIM-19}.

\begin{theorem}
  \label{thm:main_Ext}
  Let $R$ be a commutative noetherian ring and $M$ an $R$-complex. If
  for an integer $n \geqslant \dim R + \sup{\hh^*(M)}$ one has
  $\Ext_R^{n}(k(\fp),M)=0$ for all prime ideals $\fp$ in $R$, then
  $\injdim_R M < n$ holds.
\end{theorem}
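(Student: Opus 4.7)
The plan is to reduce the injective-dimension statement to the Ext-rigidity statement from the abstract, prime by prime, and then assemble via a theorem of Christensen and Iyengar. By \cite[Theorem~1.1]{LWCSBI17} it will suffice to show $\Ext_R^i(k(\fp),M) = 0$ for every $i \geqslant n$ and every prime $\fp$ in $R$; this forces $\injdim_R M < n$. We may assume $\dim R < \infty$ and $\sup\hh^*(M) < \infty$, for otherwise the hypothesis is vacuous or the conclusion is trivial.

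Fix a prime $\fp$. Hom--tensor adjunction in the derived category gives
\begin{equation*}
  \Ext_R^i(k(\fp),M) \;\cong\; \Ext_{R_\fp}^i(k(\fp),\RHom_R(R_\fp,M))
\end{equation*}
for every integer $i$, so the plan is to feed $\RHom_R(R_\fp,M)$ into \Cref{prp:supExt} to obtain vanishing in all degrees $\geqslant n$. The hypothesis of that proposition demands $n \geqslant \dim R_\fp + \sup\hh^*(\RHom_R(R_\fp,M))$; since $\dim R_\fp \leqslant \dim R$, this reduces to the cohomological bound $\sup\hh^*(\RHom_R(R_\fp,M)) \leqslant \sup\hh^*(M)$.

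Establishing that bound is the main obstacle, and this is where minimal semi-flat-cotorsion replacements enter. Let $F$ be such a replacement of $M$; it exists by~\ref{chunk}. The key input I will invoke is the precise form of \cite[Lemma~4.1]{TNkPTh20}, the analogue for minimal semi-flat-cotorsion complexes of the fact that a minimal semi-injective resolution vanishes in cohomological degrees below $\inf\hh^*(M)$. Granting it, $F_i = 0$ for $i < \inf\hh_*(F) = -\sup\hh^*(M)$. Applying $\Hom_R(R_\fp,-)$ degreewise preserves this vanishing, and by~\ref{chunk} the resulting complex $\Hom_R(R_\fp,F)$ is a semi-flat-cotorsion replacement of $\RHom_R(R_\fp,M)$, whose cohomology therefore vanishes above $\sup\hh^*(M)$, as desired. \Cref{prp:supExt} then delivers $\sup\Ext_R^*(k(\fp),M) < n$; since $\fp$ was arbitrary, \cite[Theorem~1.1]{LWCSBI17} yields $\injdim_R M < n$.
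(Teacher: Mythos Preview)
Your reduction via \cite[Theorem~1.1]{LWCSBI17} and Hom--tensor adjunction is exactly right, and the target inequality $n \geqslant \dim R_\fp + \sup\hh^*(\RHom_R(R_\fp,M))$ is the correct thing to aim for. The gap is in how you try to reach it.

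The claim that a minimal semi-flat-cotorsion replacement $F$ of $M$ satisfies $F_i = 0$ for $i < \inf\hh_*(M)$ is \emph{not} what \cite[Lemma~4.1]{TNkPTh20} says, and it is false in general. The analogy with minimal semi-injective resolutions breaks down precisely here: the dual Bass number of $M$ in degree $i$ at $\fp$ is the $k(\fp)$-dimension of $\Tor_i^{R_\fp}(k(\fp),\RHom_R(R_\fp,M))$, and since $R_\fp$ typically has positive projective dimension over $R$ (up to $\dim R/\fp$), the complex $\RHom_R(R_\fp,M)$ can have cohomology strictly above $\sup\hh^*(M)$. Concretely, take $R$ a discrete valuation ring that is not complete and $M=R$: one has $\Ext_R^1(R_{(0)},R)\cong \widehat{R}/R \neq 0$, so the $(0)$-component of $F_{-1}$ is nonzero even though $\inf\hh_*(M)=0$. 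Thus your asserted bound $\sup\hh^*(\RHom_R(R_\fp,M)) \leqslant \sup\hh^*(M)$ fails already in this example, and the appeal to \Cref{prp:supExt} collapses. What \cite[Lemma~4.1]{TNkPTh20} actually yields is a bound on the $\fp$-component of $F$ that is off by $\dim R/\fp$; summing over all primes gives only $F_i=0$ for $i < \inf\hh_*(M) - \dim R$, which would force $n \geqslant 2\dim R + \sup\hh^*(M)$ in your argument---too weak.

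The paper repairs this not by bounding $\sup\hh^*(\RHom_R(R_\fp,M))$ but by bounding the local cohomology $\sup\hh^*_{\fp_\fp}(\RHom_R(R_\fp,M))$, which is what \cite[Proposition~3.2]{CIM-19} actually needs. The key maneuver is \Cref{prp:llam}: one has $\rgam_{\fp_\fp}\RHom_R(R_\fp,M)\simeq \rgam_{\fp_\fp}\RHom_R(R_\fp,\llam^\fp M)$, and passing to $\llam^\fp M$ discards the $\fq$-components of $F$ with $\fq \not\supseteq \fp$---exactly the ones whose $\dim R/\fq$ is uncontrolled relative to $\fp$. For the surviving components one has $\dim R/\fq \leqslant \dim R/\fp$, so \cite[Lemma~4.1]{TNkPTh20} gives $\sup\hh^*(\RHom_R(R_\fp,\llam^\fp M)) \leqslant \sup\hh^*(M) + \dim R/\fp$. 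Combined with $\dim R_\fp + \dim R/\fp \leqslant \dim R$, this lands on the nose at $n$.
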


\begin{proof}
  We may assume that $R$ has finite Krull dimension and that $M$ is
  not acyclic. We may also assume that $\hh^i(M) = 0$ holds for
  $i \gg 0$, otherwise the statement is vacuous. For every prime ideal
  $\fp$ in $R$ one has
  \begin{equation*}
    0 = \Ext^{n}_R(k(\fp),M)
    \cong \Ext^{n}_{R_\fp}(k(\fp),\RHom_R(R_\fp,M))
  \end{equation*}
  by Hom--tensor adjunction in the derived category.  It suffices, by
  \cite[Theorem 1.1]{LWCSBI17} and \cite[Proposition 3.2]{CIM-19}, to
  show that
  $\dim R + \sup \hh^*(M) \geqslant \sup{\hh^*_{\fp_\fp}\RHom_R(R_\fp,M)}$ holds.
  For every $R_\fp$-complex $X$ there is an isomorphism $\rgam_{\fp_\fp}X \simeq \rgam_\fp X$ in the derived category over $R_\fp$; this follows for example from \cite[Lemma (3.2.3)]{AJL-97} and explains the first and last isomorphisms in the next display. The second isomorphism holds by \cite[Corollary
  (5.1.1)]{AJL-97} while the third comes from \cite[Proposition 8.3]{BKI-12}.
  \begin{equation}
    \label{ast2}
    \begin{aligned}
      \rgam_{\fp_\fp}\RHom_R(R_\fp,M) & \simeq \rgam_{\fp}\RHom_R(R_\fp,M)\\ &\simeq\rgam_{\fp}\llam^{\fp}\RHom_R(R_\fp,M) \\
      & \simeq \rgam_{\fp}\RHom_R(R_\fp,\llam^\fp M)\\
      &\simeq\rgam_{\fp_\fp}\RHom_R(R_\fp,\llam^\fp M)\,.
    \end{aligned}
  \end{equation}
   Let $F$ be a minimal semi-flat-cotorsion
  replacement of $M$. In cohomological notation one has $F^i=0$ for $i \gg 0$, see \ref{chunk},
  so the complex $\Lambda^\fp F \simeq \llam^\fp M$ is again a
  semi-flat-cotorsion $R$-complex, see \cite[Lemma 2.2]{PTh19}. In the
  derived category, the complex $P = \Hom_R(R_\fp,\Lambda^\fp F)$ is
  now isomorphic to $\RHom_R(R_\fp,\llam^\fp M)$. Further \cite[Lemma 4.1]{TNkPTh20} yields $P^i = 0$ for
  $i > \sup{\hh^*(M)} + \dim R/\fp$, which explains the second
  inequality in the computation below. The first equality holds by
  \eqref{ast2} and the first inequality holds by \cite[(2.7)]{CIM-19},
  \begin{align*}
    \sup{\hh^*_{\fp_\fp}(\RHom_R(R_\fp,M))} 
    & = \sup{\hh^*(\rgam_{\fp_\fp}\RHom_R(R_\fp,\llam^\fp M))}  \\
    & \leqslant \dim R_\fp + \sup{\hh^*(\RHom_R(R_\fp,\llam^\fp M))} \\
    & = \dim R_\fp + \sup{\hh^*(P)} \\
    & \leqslant \dim R_\fp + \dim R/\fp + \sup{\hh^*(M)} \\
    & \leqslant \dim R + \sup{\hh^*(M)} \,. \qedhere
  \end{align*}
\end{proof}

From the proof above one easily extracts the following rigidity
result; for modules it was stated in the Abstract.

\begin{porism}
  \label{por:por}
  Let $\fp$ be a prime ideal in $R$ and $M$ an $R$-complex. If for an
  integer $n \geqslant \dim R + \sup{\hh^*(M)}$ one has
  $\Ext_R^{n}(k(\fp),M)=0$, then $\Ext_R^{i}(k(\fp),M)=0$ holds for
  all $i \geqslant n$.
\end{porism}

The bound on $n$ in Theorem \ref{thm:main_Ext} and Porism
\ref{por:por} is sharp: Let $(R,\m,k)$ be a Cohen-Macaulay local ring
that is not Gorenstein. One has
\begin{equation*}
  \inf \Ext_R^*(k,R) = \depth R = \dim R \quad\text{but}\quad \sup \Ext_R^*(k,R) =\injdim_R R= \infty \,.
\end{equation*}
This also shows that the bound on $n$ in the next proposition is sharp.
This  statement is parallel to the first part of \cite[Theorem
4.1]{CIM-19} and could have been made in~\cite{CIM-19}.

\begin{proposition}
  \label{prp:supExt}
  Let $R$ be a commutative noetherian ring and $M$ an $R$-complex. If
  for a prime ideal $\fp$ and an integer
  $n \geqslant \dim R_\fp + \sup{\hh^*(\RHom_R(R_\fp,M))}$ one has
  $\Ext_R^{n}(k(\fp),M)=0$ then
  \begin{equation*}
    \sup\Ext^*_R(k(\fp),M) = \depth R_\fp - \width_{R_\fp} \RHom_R(R_\fp,M) < n \,.
  \end{equation*}
\end{proposition}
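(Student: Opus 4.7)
The plan is to reduce Proposition~\ref{prp:supExt} to an Ext-rigidity statement over the local ring $R_\fp$ and then invoke \cite[Proposition 3.2]{CIM-19}, applied to the colocalization $N \colonequals \RHom_R(R_\fp, M)$ regarded as a complex of $R_\fp$-modules.

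First I would reduce to the local setting. The standard derived adjunction along the ring map $R \to R_\fp$, applied with the $R_\fp$-module $k(\fp)$ in the first argument, yields an isomorphism
\[
  \RHom_R(k(\fp), M) \simeq \RHom_{R_\fp}(k(\fp), N)
\]
in the derived category of $R_\fp$. This identifies $\Ext_R^i(k(\fp), M)$ with $\Ext_{R_\fp}^i(k(\fp), N)$ for every integer $i$, and the depth and width appearing on the right-hand side of the asserted equality are already intrinsic to this local setup. Thus the entire claim becomes a statement about $N$ over $(R_\fp, \fp R_\fp, k(\fp))$.

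Next I would set $s \colonequals \sup \hh^*(N)$, dispose of the trivial acyclic case $s = -\infty$, and establish the amplitude bound
\[
  \sup \hh^*(\rgam_{\fp R_\fp}(N)) \;\leqslant\; \dim R_\fp + s.
\]
I would argue this via the \v{C}ech complex on a system of parameters of length $d \colonequals \dim R_\fp$: it is a bounded K-flat complex of $R_\fp$-modules concentrated in cohomological degrees $0,\dots,d$, whose tensor product with $N$ represents $\rgam_{\fp R_\fp}(N)$. A bicomplex spectral sequence, convergent because the \v{C}ech complex has only finitely many nonzero columns, then yields the stated bound.

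With the reductions in place, \cite[Proposition 3.2]{CIM-19} applied to $N$ over $R_\fp$ finishes the proof: the hypothesis $n \geqslant \dim R_\fp + s$ forces $n \geqslant \sup \hh^*(\rgam_{\fp R_\fp}(N))$, and combined with $\Ext_{R_\fp}^n(k(\fp), N) = 0$ it delivers both the equality $\sup \Ext_{R_\fp}^*(k(\fp), N) = \depth R_\fp - \width_{R_\fp} N$ and the strict inequality $< n$. The main technical point will be the amplitude estimate in the middle step: the classical Grothendieck vanishing bound on $\rgam_{\fp R_\fp}$ must be pushed from the bounded-below case to the possibly unbounded complex $N$, which is precisely what the \v{C}ech model affords.
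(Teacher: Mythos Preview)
Your proposal is correct and follows essentially the same route as the paper: reduce to the local ring $R_\fp$ via derived adjunction, then apply \cite[Proposition~3.2]{CIM-19} to $N=\RHom_R(R_\fp,M)$. The only difference is that where the paper simply cites \cite[(2.7)]{CIM-19} for the amplitude bound $\sup\hh^*_{\fp R_\fp}(N)\leqslant\dim R_\fp+\sup\hh^*(N)$, you re-derive it via the \v{C}ech complex on a system of parameters and a (convergent, since one direction is bounded) bicomplex spectral sequence; this is precisely the content of that citation, so the two arguments coincide.
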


\begin{proof}
  Hom--tensor adjunction in the derived category yields for every
  integer $n$ an isomorphism
  \begin{equation*}
    \Ext_R^n(k(\fp),M) \cong \Ext_{R_\fp}^n(k(\fp),\RHom(R_\fp,M)) \,.
  \end{equation*}
  Thus, the assertions follow immediately from \cite[Proposition
  3.2]{CIM-19} since one has
  $n \geqslant \dim R_\fp + \sup{\hh^*(\RHom_R(R_\fp,M))} \geqslant
  \sup\hh_{\fp_\fp}^*(\RHom(R_\fp,M))$; see \cite[(2.7)]{CIM-19}.
\end{proof}

\section{Flat dimension}
\label{sec:fd}
\noindent
In this section we prove three statements that are dual to statements
about injective dimension in the literature. Our proofs rely on the
existence and structure of minimal semi-flat-cotorsion replacements,
hence the assumption that the ring has finite Krull dimension. The
first result below is a counterpart to \cite[Theorem~5.1]{CIM-19}, and could have been stated even in \cite{TNkPTh20}.

\begin{theorem}
  \label{thm:fdRigid}
  Let $R$ be a commutative noetherian ring of finite Krull dimension
  and $M$ an $R$-complex with $\inf\hh_*(M)>-\infty$. If for an
  integer $n\geqslant\sup\hh_*(M)$ one has
  \begin{equation*}
    \Tor_{n+1}^{R_\fp}(k(\fp),\RHom_R(R_\fp,M))=0\;\text{ for every
      prime ideal }\fp\text{ in }R \,,
  \end{equation*}
  then the flat dimension of $M$ is at most $n$.
\end{theorem}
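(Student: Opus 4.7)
The plan is to produce a minimal semi-flat-cotorsion replacement $F$ of $M$, which exists by \ref{chunk} because $R$ has finite Krull dimension, and to show $F_i = 0$ for all $i > n$; by \ref{chunk} this will yield $\flatdim_R M \leqslant n$. We may assume $\sup \hh_*(M) \leqslant n < \infty$ so that $M$ is homologically bounded; then \ref{chunk} also gives $F_i = 0$ for $i \ll 0$, which is what enables the Hom/$\Lambda$ manipulations below to be carried out on honest-to-goodness semi-flat-cotorsion complexes.

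Fixing a prime $\fp$, the central step is to apply Theorem~\ref{thm:supTor} over the local ring $R_\fp$ to the complex $N \colonequals \RHom_R(R_\fp, M)$. I first need to verify the boundedness hypothesis $n \geqslant \sup \hh^{\fp R_\fp}_*(N)$: by Proposition~\ref{prp:llam} one has $\llam^{\fp R_\fp}(N) \simeq \RHom_R(R_\fp, \llam^\fp M)$, and both $\llam^\fp$ and $\RHom_R(R_\fp, -)$ fail to increase $\sup \hh_*$ on bounded-above complexes (via, e.g., the \v{C}ech presentation of $\llam^\fp$ used in the proof of Proposition~\ref{prp:llam} and a minimal injective resolution for $\RHom_R(R_\fp, -)$), so $\sup \hh^{\fp R_\fp}_*(N) \leqslant \sup \hh_*(M) \leqslant n$. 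Combined with the hypothesis $\Tor_{n+1}^{R_\fp}(k(\fp), N) = 0$, Theorem~\ref{thm:supTor} then yields $\Tor_i^{R_\fp}(k(\fp), N) = 0$ for all $i > n$ and every prime $\fp$.

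To finish, I transfer this local vanishing to the statement $F_i = 0$ for $i > n$. By \ref{chunk}, $\Hom_R(R_\fp, \Lambda^\fp F)$ is a semi-flat-cotorsion replacement of $N$ over $R_\fp$, and minimality of $F$ together with the canonical product decomposition of flat-cotorsion modules from \cite{TNkPTh20} should identify $\Tor_i^{R_\fp}(k(\fp), N)$ with the ``$\fp$-rank'' of $F_i$, in the spirit of Lemma~\ref{lem:TenskLam} and Remark~\ref{rmk:0} applied over $R_\fp$; vanishing at every $\fp$ then forces $F_i = 0$, completing the proof. The hardest part will be exactly this last identification: one needs the flat-side analog of the Bass-number formula for minimal semi-injective resolutions, recovering the $\fp$-components of $F_i$ from $\Tor_i^{R_\fp}(k(\fp), \RHom_R(R_\fp, M))$. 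Such a formula is essentially built into the machinery of \cite{TNkPTh20}, but making it precise will require either promoting the minimality of $F$ over $R$ to minimality of $\Hom_R(R_\fp, \Lambda^\fp F)$ over $R_\fp$ or arguing rank-detection directly.
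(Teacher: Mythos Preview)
Your detour through Theorem~\ref{thm:supTor} has a genuine gap: the claim that $\llam^\fp$ does not increase $\sup\hh_*$ is false. Over any local ring $(R,\m,k)$ of positive depth, Greenlees--May duality gives $\llam^\m E_R(k)\simeq\RHom_R(\rgam_\m R,E_R(k))$, and since $\rgam_\m R$ has homology only in strictly negative homological degrees one gets $\sup\hh^\m_*(E_R(k))>0=\sup\hh_*(E_R(k))$. The \v{C}ech presentation you invoke only yields $\sup\hh_*(\llam^\fp M)\leqslant\sup\hh_*(M)+r$ where $r$ is the number of generators of $\fp$. In fact the quantity $\sup\hh^{\fp R_\fp}_*(\RHom_R(R_\fp,M))$ you need to bound is controlled by the top nonvanishing dual Bass number $B_i^\fp$, so bounding it by $n$ a priori is tantamount to what you are trying to prove; the appeal to Theorem~\ref{thm:supTor} becomes circular.

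The paper's proof bypasses Theorem~\ref{thm:supTor} entirely, and in doing so also dissolves what you flag as ``the hardest part.'' Minimality of $F$ already guarantees, via \cite[Theorem~2.3]{TNkPTh20}, that the complex $k(\fp)\otimes_{R_\fp}\Hom_R(R_\fp,F)$ has zero differential. Hence the \emph{single} vanishing hypothesis $\Tor^{R_\fp}_{n+1}(k(\fp),\RHom_R(R_\fp,M))=0$ immediately gives
\[
0=\bigl(k(\fp)\otimes_{R_\fp}\Hom_R(R_\fp,F)\bigr)_{n+1}=k(\fp)\otimes_{R_\fp}\Hom_R(R_\fp,F_{n+1})=k(\fp)^{(B_{n+1}^\fp)}
\]
by \cite[(1.15)]{TNkPTh20}, so $F_{n+1}=0$. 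No rigidity step and no promotion of minimality to $R_\fp$ is needed; the dual Bass number formula is built into \cite{TNkPTh20} exactly as you suspected, and the zero-differential statement there applies directly to $\Hom_R(R_\fp,F)$ without passing through $\Lambda^\fp$.
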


\begin{proof}
  Let $F$ be a minimal semi-flat-cotorsion replacement of $M$; the assumption $\inf\hh_*(M)>-\infty$ guarantees that $F_i=0$ holds for
  $i\ll0$, see \ref{chunk}. Per \cite[Remark 4.5]{TNkPTh20}, for every integer $i$ one has
  \begin{equation*}
    F_i = \prod_{\fp\in\Spec R}\Lambda^\fp\left(R_\fp^{(B^\fp_i)}\right) \,,
  \end{equation*}
  where the cardinality of $B_i^\fp$ 
  is the $\kappa(\fp)$-dimension of $\Tor^{R_\fp}_{i}(k(\fp),\RHom_R(R_\fp,M))$. Thus it follows from the assumption that $|B^\fp_{n+1}|=0$ holds for all primes $\fp$. As $n\geqslant\sup\hh_*(M)$ holds, the flat dimension of $M$ is at most $n$.
%
\end{proof}

The boundedness condition in the next result, which is dual to
\cite[Proposition 5.3.I]{LLAHBF91}, is necessary, without it the flat
dimension may grow under colocalization; see Example~\ref{exa:1}.

\begin{proposition}
  \label{prop:FlatDimTor}
  Let $R$ be a commutative noetherian ring of finite Krull dimension
  and $M$ an $R$-complex with $\inf{\hh_*(M)} > -\infty$. The next
  equalities hold
  \begin{align*}
    \flatdim_R M &= \sup{ \{ i \in \ZZ \mid \Tor_i^{R_\fp}(k(\fp),\RHom(R_\fp,M))
                   \ne 0 \text{ for some } \fp\in\Spec R \}} \\
                 & = \sup_{\fp\in\Spec R}{\{\flatdim_{R_\fp}\RHom_R(R_\fp,M) \}} \,.
  \end{align*}
\end{proposition}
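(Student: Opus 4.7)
The plan is to interpret all three quantities in terms of a single minimal semi-flat-cotorsion replacement $F$ of $M$. Such an $F$ exists by \ref{chunk}, and the hypothesis $\inf\hh_*(M) > -\infty$ ensures $F_i = 0$ for $i \ll 0$. It also follows from \ref{chunk} that $F$ detects the flat dimension, so $\flatdim_R M = \sup\{i : F_i \ne 0\}$, and the structure theorem \cite[Remark 4.5]{TNkPTh20} supplies decompositions
\begin{equation*}
  F_i = \prod_{\fp \in \Spec R} \Lambda^\fp\bigl(R_\fp^{(B_i^\fp)}\bigr)
\end{equation*}
in every degree. Consequently, $F_i \ne 0$ precisely when some dual Bass number $B_i^\fp$ is nonzero.

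For the first equality of the proposition, I would reuse the chain of isomorphisms from the proof of \Cref{thm:fdRigid}; it produces, for every integer $i$ and every prime $\fp$, the identification
\begin{equation*}
  \Tor_i^{R_\fp}(k(\fp),\RHom_R(R_\fp,M)) \cong k(\fp)^{(B_i^\fp)}.
\end{equation*}
Since the right-hand side is nonzero exactly when $B_i^\fp \ne 0$, taking suprema over $i$ and $\fp$ matches $\flatdim_R M$ with the middle expression.

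For the second equality I would establish two inequalities. The bound $\sup_\fp \flatdim_{R_\fp} \RHom_R(R_\fp, M) \leqslant \flatdim_R M$ holds because, by \ref{chunk}, $\Hom_R(R_\fp, F)$ is a semi-flat $R_\fp$-complex representing $\RHom_R(R_\fp, M)$ that vanishes in every degree where $F$ does. The reverse inequality is immediate from the standard fact that nonvanishing of $\Tor_i$ forces flat dimension at least $i$: for each prime one has
\begin{equation*}
  \flatdim_{R_\fp} \RHom_R(R_\fp, M) \geqslant \sup\{i : \Tor_i^{R_\fp}(k(\fp),\RHom_R(R_\fp,M)) \ne 0\},
\end{equation*}
and taking the supremum over $\fp$ recovers $\flatdim_R M$ via the first equality.

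The proof is essentially a bookkeeping exercise once one commits to using a minimal semi-flat-cotorsion replacement, so there is no serious obstacle beyond verifying that each ingredient---the structure theorem, the detection property of $F$, and the Tor identification from \Cref{thm:fdRigid}---is available in the required generality. A minor bonus of the structural approach is that the case $\flatdim_R M = \infty$ is handled uniformly: $F$ is then unbounded in positive degrees, forcing both suprema on the right of the proposition to be infinite as well.
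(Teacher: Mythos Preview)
Your proposal is correct and uses essentially the same ingredients as the paper's proof: the minimal semi-flat-cotorsion replacement $F$, its colocalizations $\Hom_R(R_\fp,F)$, and the dual Bass number computation from the proof of \Cref{thm:fdRigid}. The only organizational difference is that the paper establishes a cyclic chain of inequalities---invoking \Cref{thm:fdRigid} as a black box for $\flatdim_R M \leqslant \text{(middle)}$ and then bounding $\text{(middle)} \leqslant \text{(third)} \leqslant \flatdim_R M$ under the assumption that $\flatdim_R M$ is finite---whereas you unpack the identification $\Tor_i^{R_\fp}(k(\fp),\RHom_R(R_\fp,M)) \cong k(\fp)^{(B_i^\fp)}$ directly to read off both equalities and treat the infinite-dimensional case uniformly.
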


\begin{proof}
  If $M$ is acyclic, all three quantities equal $-\infty$, and so we may
  assume $\hh_*(M)$ is nonzero.  Set $s \colonequals \sup\hh_*(M)$,
  one has $\Tor_s^{R_\fp}(k(\fp),\RHom(R_\fp,M)) \ne 0$ for some prime
  ideal $\fp$ in $R$, see \cite[Remark 4.5]{TNkPTh20}. The inequality
  \begin{equation*}
    \flatdim_R M \leqslant \sup{ \{ i \in \ZZ \mid \Tor_i^{R_\fp}(k(\fp),\RHom(R_\fp,M))
      \ne 0 \text{ for some } \fp\in\Spec R \}}
  \end{equation*}
  now follows immediately from \cref{thm:fdRigid}. To verify the opposite
  inequality, assume that $f \colonequals \flatdim_R M$ is finite and
  let $F$ be a minimal semi-flat-cotorsion replacement of $M$. As
  $\inf\hh_*(M)>-\infty$ holds, one has $F_i=0$ for $i > f$ and
  $i\ll0$, see \ref{chunk}. For every prime ideal $\fp$ in $R$, the
  $R_\fp$-complex $\Hom_R(R_\fp,F)$ is semi-flat-cotorsion and
  isomorphic to $\RHom_R(R_\fp,M)$ in the derived category over
  $R_\fp$. Since $\Hom_R(R_\fp,F)_i = \Hom_R(R_\fp,F_i) = 0$ holds for
  $i > f$ one has
  \begin{align*}
    \sup{\{i\in\ZZ \mid
    \Tor_i^{R_\fp}(k(\fp),\RHom(R_\fp,M)) \ne 0\}} %
    & \leqslant \flatdim_{R_\fp}\RHom_R(R_\fp,M) \\
    & \leqslant \flatdim_R M \,. \qedhere
  \end{align*}
\end{proof}

For complexes with bounded homology the next result was proved in
\cite[Theorem 4.8]{TNkPTh20}; it compares to \cite[Proposition 5.2 and
Corollary 5.9]{CIM-19}, and the proof is modeled on the proof of
\cite[Proposition 5.2]{CIM-19}.

\begin{theorem}
  Let $R$ be a commutative noetherian ring of finite Krull dimension
  and $M$ an $R$-complex. If $M$ has finite flat dimension, then
  \begin{equation*}
    \flatdim_R M = \sup_{\fp\in\Spec R}\{\depth{R_\fp} - \depth_{R_\fp}\RHom_R(R_\fp,M)\} \,.
  \end{equation*}
\end{theorem}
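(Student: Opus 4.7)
The plan is to combine the structural information about minimal semi-flat-cotorsion replacements supplied by \cref{chunk} with the depth equality from \cref{thm:supTor}, in the spirit of the proof of \cite[Proposition 5.2]{CIM-19}. Note that \cref{prop:FlatDimTor} cannot be invoked directly, since its hypothesis $\inf\hh_*(M) > -\infty$ does not follow from finite flat dimension alone; however, a minimal semi-flat-cotorsion replacement supplies enough structure to proceed without this condition.

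We may assume $M$ is not acyclic, so $f \colonequals \flatdim_R M$ is a nonnegative integer. Let $F$ be a minimal semi-flat-cotorsion replacement of $M$; by \cref{chunk} we have $F_i = 0$ for $i > f$, and as $F$ detects the flat dimension, $F_f \ne 0$. The structure formula \cite[Remark 4.5]{TNkPTh20}, $F_f = \prod_\fp \Lambda^\fp(R_\fp^{(B_f^\fp)})$, then produces a prime $\fp_0$ with $B_f^{\fp_0} \ne \emptyset$. For each prime $\fp$, the complex $\Hom_R(R_\fp, F)$ is a semi-flat-cotorsion replacement of $\RHom_R(R_\fp, M)$ over $R_\fp$ by \cref{chunk}, and its degree-$i$ component vanishes for $i > f$; the computation from the proof of \cref{thm:fdRigid} yields
\begin{equation*}
\Tor_i^{R_\fp}(k(\fp), \RHom_R(R_\fp, M)) \cong k(\fp)^{(B_i^\fp)}\,,
\end{equation*}
so $\sup\Tor_*^{R_\fp}(k(\fp),\RHom_R(R_\fp,M)) \leqslant f$ for every $\fp$, with equality at $\fp=\fp_0$.

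It remains to invoke \cref{thm:supTor} to convert the Tor supremum into the desired depth difference. Its hypothesis requires an integer $n \geqslant \sup\hh_*^{\fp_\fp}(\RHom_R(R_\fp,M))$ with vanishing $\Tor_{n+1}$; the second condition holds for $n=f$, and the first is established by combining \cref{prp:llam} with $\llam^\fp M \simeq \Lambda^\fp F$ to obtain
\begin{equation*}
\llam^{\fp_\fp}\RHom_R(R_\fp,M) \simeq \RHom_R(R_\fp,\Lambda^\fp F) \simeq \Hom_R(R_\fp,\Lambda^\fp F)\,,
\end{equation*}
a complex whose degree-$i$ component vanishes for $i > f$. \cref{thm:supTor} therefore yields
\begin{equation*}
\sup\Tor_*^{R_\fp}(k(\fp),\RHom_R(R_\fp,M)) = \depth R_\fp - \depth_{R_\fp}\RHom_R(R_\fp,M)
\end{equation*}
for every prime $\fp$; combined with the bound and the equality at $\fp_0$ from the previous paragraph, taking the supremum over $\fp$ gives the formula. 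The main technical step is the identification of local homology via \cref{prp:llam}, which permits \cref{thm:supTor} to be applied without any a priori boundedness of $\RHom_R(R_\fp,M)$.
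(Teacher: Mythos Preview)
Your argument has a genuine gap at the step where you invoke \cref{chunk} and ``the computation from the proof of \cref{thm:fdRigid}'' to obtain
\[
\Tor_i^{R_\fp}\!\bigl(k(\fp),\RHom_R(R_\fp,M)\bigr)\cong k(\fp)^{(B_i^\fp)}\,.
\]
That computation passes from $k(\fp)\lotimes_{R_\fp}\RHom_R(R_\fp,M)$ to $k(\fp)\otimes_{R_\fp}\Hom_R(R_\fp,F)$, which requires $\Hom_R(R_\fp,F)$ to be semi-flat. The paragraph of \cref{chunk} you cite establishes this \emph{only} under the hypothesis $F_i=0$ for $i\ll0$---precisely the boundedness you have just disclaimed. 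This is not a mere technicality: \cref{exa:1} produces a minimal semi-flat-cotorsion complex $F$ with $\flatdim_RF=0$ (so $F_i=0$ for $i>0$) such that $\Hom_R(R_\fp,F)$ is not semi-flat and in fact has infinite flat dimension over $R_\fp$. Both the vanishing $\Tor_{f+1}=0$ needed to trigger \cref{thm:supTor} and the nonvanishing $\Tor_f\neq0$ at $\fp_0$ needed for the equality rest on this unjustified identification. The same boundedness issue undermines the last isomorphism in your local-homology step, since the argument that $\Lambda^\fp F$ is again semi-flat-cotorsion (compare the proof of \cref{thm:main_Ext}) also uses $F_i=0$ for $i\ll0$.

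The paper's proof is engineered precisely to avoid this obstacle: it truncates $F$ into $F_{\leqslant n-1}$ and the \emph{bounded} complex $F_{\geqslant n}$, applies \cref{prop:FlatDimTor} legitimately to the latter, and then uses the depth lemma to transport depth information from $\RHom_R(R_\fp,F_{\geqslant n})$ back to $\RHom_R(R_\fp,F)$. Your strategy of reading everything off the minimal replacement directly would be more elegant if it worked, but the failure of colocalization to preserve semi-flatness---the subject of Section~5---is exactly what blocks it.
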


\begin{proof}
  The equality is trivial if $M$ is acyclic, so assume that it is not
  and set $f \colonequals \flatdim_R M$. Let $F$ be a semi-flat
  replacement of $M$ with $F_i =0$ for $i > f$. To prove the asserted
  equality it suffices to show that the inequality
  \begin{equation}\label{ast}
    \flatdim_R F \geqslant \depth{R_\fp} - \depth_{R_\fp}\RHom_R(R_\fp,F)
  \end{equation}
  holds for every prime $\fp$ with equality for some $\fp$. For every
  $n \leqslant f$ there is an exact sequence of complexes of flat
  $R$-modules
  \begin{equation*}
    0 \longrightarrow F_{\leqslant n-1} \longrightarrow F \longrightarrow F_{\geqslant n} \longrightarrow 0 \,.
  \end{equation*}
  The complex $F_{\geqslant n}$ is a bounded complex of flat modules,
  so it is semi-flat and hence so is $F_{\leqslant n-1}$; see for
  example \cite[6.1]{LWCHHl15}. Evidently one has
  \begin{equation*}
    \flatdim_R F_{\leqslant n-1} \leqslant n-1 \quad\text{and}\quad \flatdim_R F_{\geqslant n} = f\,.
  \end{equation*}
  Now \cref{prop:FlatDimTor} and \cite[(2.3)]{CIM-19} conspire to
  yield
  \begin{equation}
    \label{sharp}
    f = \sup_{\fp\in\Spec R}\{\depth R_\fp - \depth_{R_\fp}\RHom_R(R_\fp,F_{\geqslant n})\} \,.
  \end{equation}
  To prove \eqref{ast} we fix a prime ideal $\fp$. Without loss of
  generality we can assume that $\depth_{R_\fp}\RHom_R(R_\fp,F)$ is
  finite, set $d = -\depth_{R_\fp}\RHom_R(R_\fp,F)$. Now one has
  \begin{equation*}
    -d \geqslant \inf{\hh^*(\RHom_R(R_\fp,F)}) \geqslant \inf{\hh^*(F)} = -\sup{\hh_*(F)} \geqslant -f
  \end{equation*} and, thus, $d \leqslant f$. As one now has
  \begin{align*}
    \depth_{R_\fp}\RHom_R(R_\fp,F_{\leqslant d-1}) %
    &\geqslant \inf \hh^*(\RHom_R(R_\fp,F_{\leqslant d-1})) \\
    &\geqslant \inf \hh^*(F_{\leqslant d-1}) \\
    &\geqslant 1-d\\
    &= 1 + \depth_{R_\fp}\RHom_R(R_\fp,F)\,,
  \end{align*}
  the depth lemma yields
  \begin{equation*}
    \depth_{R_\fp}\RHom_R(R_\fp,F) = \depth_{R_\fp}\RHom_R(R_\fp,F_{\geqslant d}) \,.    
  \end{equation*} 
  The inequality \eqref{ast} follows from \eqref{sharp} by taking
  $n=d$.
    
  Now choose by \eqref{sharp} a prime ideal $\fp$ such that
  \begin{equation}
    \label{f}
    f = \depth R_\fp - \depth_{R_\fp}\RHom_R(R_\fp,F_{\geqslant f-1})
  \end{equation} 
  holds. The inequality \eqref{ast} holds for every complex of finite
  flat dimension; applied to the truncated complex $F_{\leqslant f-2}$
  it yields
  \begin{equation}
    \label{f-2}
    f-2 \geqslant \flatdim_R F_{\leqslant f-2} \geqslant \depth R_\fp - \depth_{R_\fp}\RHom_R(R_\fp,F_{\leqslant f-2}) \,.
  \end{equation}
  Elimination of $\depth R_\fp$ between \eqref{f} and \eqref{f-2}
  yields
  \begin{equation*}
    \depth_{R_\fp}\RHom_R(R_\fp,F_{\geqslant f-1}) + 2  \leqslant  \depth_{R_\fp}\RHom_R(R_\fp,F_{\leqslant f-2})\,.
  \end{equation*}
  Now apply the depth lemma to the triangle
  \begin{equation*}
    \RHom_R(R_\fp,F_{\leqslant f-2}) \longrightarrow \RHom_R(R_\fp,F) \longrightarrow 
    \RHom_R(R_\fp,F_{\geqslant f-1}) \longrightarrow
  \end{equation*}
  to get
  $\depth_{R_\fp}\RHom_R(R_\fp,F)=\depth_{R_\fp}\RHom_R(R_\fp,F_{\geqslant
    f-1})$; substituting this into \eqref{f} yields the desired
  result.
\end{proof}

\begin{remark}
  For an $R$-complex $M$ of finite flat dimension, the equality
  \begin{equation*}
    \flatdim_R M = \sup_{\fp\in\Spec R}\{\depth{R_\fp} - \depth_{R_\fp}M_\fp\}
  \end{equation*}
  holds, see \cite[(4.3)]{CIM-19}. Echoing \cite[Remark 5.10]{CIM-19}
  we remark that we do not know how the numbers $\depth_{R_\fp}M_\fp$
  and $\depth_{R_\fp}\RHom_R(R_\fp,M)$ compare: If $(R,\m,k)$ is local
  of positive Krull dimension and $E$ is the injective envelope of $k$, then one has
  $\depth_{R_\fp}\RHom_R(R_\fp,E) = 0$ for every prime
  ideal $\fp$ by the isomorphisms
  \begin{equation*}
    \RHom_{R_\fp}(k(\fp),\RHom_R(R_\fp,E)) \simeq
    \RHom_R(k(\fp),E) \simeq \Hom_R(k(\fp),E) \ne 0 \,.
  \end{equation*}
  On the other hand, for $\fp \not = \m$ the module
  $E_\fp$ is zero and hence of infinite depth.
\end{remark}

\section{Examples}
\noindent
We close with a series of examples to show that boundedness
conditions, such as the one in Proposition \ref{prop:FlatDimTor}, are
necessary in certain statements about semi-flat complexes. In
particular, neither colocalization nor completion need preserve
finiteness of flat dimension. The examples build on \cite[Example
5.11]{TNkPTh20}.

\begin{example}
  \label{exa:1}
  Let $k$ be a field and consider the local ring $R=k[[x,y]]/(x^2)$;
  it has only two prime ideals: $\fp=(x)$ and $\m=(x,y)$.  By
  \cite[Example 5.11]{TNkPTh20} there exists a minimal
  semi-flat-cotorsion $R$-complex $Y$, called $Y_P$ in \cite{TNkPTh20}, such that $\Hom_R(R_\fp,Y)$
  is not semi-flat.

  Set $F=(Y)_{\leqslant 0}$ and $F'=(Y)_{\geqslant 1}$, the hard
  truncations of $Y$ above at 0 and below at 1, respectively. There
  is an exact sequence $0 \to F \to Y \to F' \to 0$ which is
  degreewise split.  Application of $\Hom_R(R_\fp,-)$ now yields the
  exact sequence
  \begin{equation*}
    0 \longrightarrow \Hom_R(R_\fp,F)  \longrightarrow \Hom_R(R_\fp,Y)
    \longrightarrow \Hom_R(R_\fp,F') \longrightarrow 0 \,.
  \end{equation*}
  Since the complex $\Hom_R(R_\fp,F')$ consists of flat modules and
  $\Hom_R(R_\fp,F')_i=0$ holds for $i\ll0$, it is semi-flat. Since
  $\Hom_R(R_\fp,Y)$ is not semi-flat, neither is the complex
  $\Hom_R(R_\fp,F)$.

  It follows from \cite[Theorem 2.3]{TNkPTh20} that $F$ is a minimal
  semi-flat-cotorsion complex; as $\hh_0(F) \ne 0$ it has flat
  dimension $0$. We claim, however, that $\Hom_R(R_\fp,F)$ has
  infinite flat dimension. Suppose, to the contrary, that the complex
  $\Hom_R(R_\fp,F)$ has finite flat dimension. Choose a semi-flat
  resolution $P \to \Hom_R(R_\fp,F)$ with $P_i=0$ for $i\gg0$ and let
  $C$ be its mapping cone. Evidently, $C$ is an acyclic complex of
  flat $R$-modules and $C_i=0$ holds for $i\gg0$. We argue that $C$ is
  pure-acyclic; that is, all its cycle modules are flat: For every
  integer $i$, let $Z_i=\Ker(C_i\to C_{i-1})$ be the cycle module in
  degree $i$. There is an exact sequence
  $0 \to Z_i \to C_i \to Z_{i-1} \to 0$. The ring $R$ is Gorenstein of
  Krull dimension 1, so by a result of Bass \cite[Corollary
  5.6]{HBs62} the finitistic flat dimension of $R$ is $1$. As $C_i$ is
  flat, it follows that $Z_i$ is flat. As $i$ was arbitrary, this
  shows that $C$ is pure-acyclic. It now follows from \cite[Theorem
  7.3]{LWCHHl15} that $C$ is semi-flat. As $C$ fits in a short exact
  sequence with the complexes $P$ and $\Hom_R(R_\fp,F)$, of which the
  latter is not semi-flat, this is a contradiction. It follows that
  $\Hom_R(R_\fp,F)$ has infinite flat dimension.
\end{example}

One can draw the same conclusion as in the previous example about the
$\m$-completion of $F$:
  
\begin{example}
  \label{exa:3}
  Let $R$ and $F$ be as in Example~\ref{exa:1}.  By
  \cite[(1.17)]{TNkPTh20} there is an exact sequence
  \begin{equation*}
    0 \longrightarrow \Hom_R(R_\fp,F) \longrightarrow F \longrightarrow
    \Lambda^{\m}(F) \longrightarrow 0 \,.  
  \end{equation*}
  As $F$ is semi-flat of finite flat dimension, equal to 0, but
  $\Hom_R(R_\fp,F)$ is not semi-flat and does not have finite flat
  dimension, it follows that the complex $\Lambda^{\m}(F)$ is not
  semi-flat and does not have finite flat dimension.
\end{example}

The examples above are dual to the examples in \cite[Section
6]{CIM-19}, and we take this opportunity to clarify one of the
statements made there.

\begin{remark}
  \label{rmk:2}
  Let $(R,\m,k)$ be local. It is stated in \cite[Remark 6.2]{CIM-19}
  that the support of the product $\operatorname{E}_R(k)^\mathbb{N}$
  is all of $\Spec{R}$, and that is correct though the argument
  provided in \cite{CIM-19} is too brief to be accurate. Here is a
  complete argument:
    
  Set $E = \operatorname{E}_R(k)$ and for every $n \in \mathbb{N}$ let
  $E_n$ denote the submodule
  \begin{equation*}
    (0:_E \m^n) \cong \Hom_R(R/\m^n,E) \,.
  \end{equation*} 
  It is the injective envelope of the artinian ring $R/\m^n$, so one
  has $(0:_RE_n) = \m^n$. Indeed, for $x$ in $R$ the isomorphism
  $R/\m^n \cong \Hom_{R/\m^n}(E_n,E_n) \cong \Hom_R(E_n,E_n)$
  identifies the homothety $E_n \xrightarrow{x} E_n$ with the coset
  $x + \m^n$ in $R/\m^n$. Each submodule $E_n$ has finite length; in
  particular, it is generated by elements
  $e_{n,1},\ldots,e_{n,m_n}$. Now let $e$ be the family of all these
  generators in the countable product
  $\prod_{n\in\mathbb{N}}\prod_{i=1}^{m_n}E$. It follows from Krull's
  intersection theorem that the homomorphism $R \to E^\mathbb{N}$
  given by $1 \mapsto e$ is injective, since an element in the kernel
  annihilates $E_n$ for every $n\in\mathbb{N}$ and hence belongs to
  the intersection $\bigcap_{n\in\mathbb{N}}\m^n$. As localization is
  exact, $R_\fp$ is now a non-zero submodule of $(E^\mathbb{N})_\fp$
  for every prime ideal $\fp$ in $R$.
\end{remark}

 Let $R$ be a commutative noetherian ring, $P$ a projective
  $R$-module, and $F$ a semi-flat $R$-complex.  As products of flat
  $R$-modules are flat, $\Hom_R(P,F)$ is a complex of flat
  $R$-modules. If $F_i=0$ holds for $i\ll0$, then the complex
  $\Hom_R(P,F)$ satisfies the same boundedness condition, whence it is
  semi-flat. Without this boundedness condition the conclusion may fail.

\begin{example}
  Let $R$ be the ring and $F$ the semi-flat $R$-complex from
  Example~\ref{exa:1}. There exist projective $R$-modules $P$ such
  that $\Hom_R(P,F)$ is not semi-flat: As $R$ is Gorenstein of Krull
  dimension 1, the finitistic projective dimension of $R$ is $1$ by
  \cite[Corollary 5.6]{HBs62}. It follows that there is an exact
  sequence $0 \to P_1 \to P_0 \to R_\fp \to 0$ where $P_1$ and $P_0$
  are projective $R$-modules. As $F$ is a complex of flat-cotorsion
  modules, in particular modules that are $\mathrm{Ext}^1$-orthogonal
  to $R_\fp$, it yields an exact sequence
  \begin{equation*}
    0 \longrightarrow\Hom_R(R_\fp,F) \longrightarrow \Hom_R(P_0,F) \longrightarrow \Hom_R(P_1,F) 
    \longrightarrow 0 \,.
  \end{equation*}
  Assume towards a contradiction that $\Hom_R(P_1,F)$ and
  $\Hom_R(P_0,F)$ are both semi-flat $R$-complexes. As $F_i = 0$ holds
  for $i>0$, it follows that both complexes have finite flat
  dimension, at most $0$, and hence so has $\Hom_R(R_\fp,F)$. This
  contradicts the conclusion in Example~\ref{exa:1} that
  $\Hom_R(R_\fp,F)$ has infinite flat dimension.
\end{example}

For the ring $R=k[[x,y]]/(x^2)$ with $\fp=(x)$ from
Example \ref{exa:1} one has $R_\fp = R_y$, so it follows from
\cite[Example 1.6]{TNkPTh20} that the modules $P_0$ and $P_1$
can be chosen as countable direct sums of copies of
  $R$. Thus, $F$ is an example of a semi-flat complex such that the
product $F^\mathbb{N}$ is not semi-flat. Compare this to the fact that
for a semi-injective complex $I$ the coproduct $I^{(\mathbb{N})}$ need
not be semi-injective; see Iacob and Iyengar \cite[Theorem
2.8]{AIcSBI09}.

\section*{Acknowledgments}
\noindent
It is our pleasure to thank Srikanth Iyengar and Thomas Marley for discussions related to this work. We also thank the anonymous referee for their feedback, including the observation about the modules $P_0$ and $P_1$ recounted right above.

\def\soft#1{\leavevmode\setbox0=\hbox{h}\dimen7=\ht0\advance \dimen7
  by-1ex\relax\if t#1\relax\rlap{\raise.6\dimen7
  \hbox{\kern.3ex\char'47}}#1\relax\else\if T#1\relax
  \rlap{\raise.5\dimen7\hbox{\kern1.3ex\char'47}}#1\relax \else\if
  d#1\relax\rlap{\raise.5\dimen7\hbox{\kern.9ex \char'47}}#1\relax\else\if
  D#1\relax\rlap{\raise.5\dimen7 \hbox{\kern1.4ex\char'47}}#1\relax\else\if
  l#1\relax \rlap{\raise.5\dimen7\hbox{\kern.4ex\char'47}}#1\relax \else\if
  L#1\relax\rlap{\raise.5\dimen7\hbox{\kern.7ex
  \char'47}}#1\relax\else\message{accent \string\soft \space #1 not
  defined!}#1\relax\fi\fi\fi\fi\fi\fi}
  \providecommand{\MR}[1]{\mbox{\href{http://www.ams.org/mathscinet-getitem?mr=#1}{#1}}}
  \renewcommand{\MR}[1]{\mbox{\href{http://www.ams.org/mathscinet-getitem?mr=#1}{#1}}}
  \providecommand{\arxiv}[2][AC]{\mbox{\href{http://arxiv.org/abs/#2}{\sf
  arXiv:#2 [math.#1]}}} \def\cprime{$'$}
\providecommand{\bysame}{\leavevmode\hbox to3em{\hrulefill}\thinspace}
\providecommand{\MR}{\relax\ifhmode\unskip\space\fi MR }
\providecommand{\MRhref}[2]{%
  \href{http://www.ams.org/mathscinet-getitem?mr=#1}{#2}
}
\providecommand{\href}[2]{#2}

\end{document}